%% This is file `elsarticle-template-1a-num.tex',
%%
%% Copyright 2009 Elsevier Ltd
%%
%% This file is part of the 'Elsarticle Bundle'.
%% ---------------------------------------------
%%\usepackage{amsmath}
%% It may be distributed under the conditions of the LaTeX Project Public
%% License, either version 1.2 of this license or (at your option) any
%% later version.  The latest version of this license is in
%%    http://www.latex-project.org/lppl.txt
%% and version 1.2 or later is part of all distributions of LaTeX
%% version 1999/12/01 or later.
%%
%% The list of all files belonging to the 'Elsarticle Bundle' is
%% given in the file `manifest.txt'.
%%
%% Template article for Elsevier's document class `elsarticle'
%% with numbered style bibliographic references
%%
%% $Id: elsarticle-template-1a-num.tex 151 2009-10-08 05:18:25Z rishi $
%% $URL: http://lenova.river-valley.com/svn/elsbst/trunk/elsarticle-template-1a-num.tex $
%%
\documentclass[preprint,12pt]{elsarticle}
\usepackage{amsfonts}
\usepackage{bbm}
\usepackage{mathrsfs}
\usepackage{mathrsfs}
%\journal{Algebra Colloquium}
\textheight=8.5in \topmargin=0.2in \textwidth=5.6in
\oddsidemargin=0.62in

\usepackage{amsfonts,mathrsfs,latexsym,amsmath,amssymb,amsthm}
%\def\beln#1{#1 \in \mathbb{Z}}
%% Use the option review to obtain double line spacing
%% \documentclass[preprint,review,12pt]{elsarticle}

%% Use the options 1p,twocolumn; 3p; 3p,twocolumn; 5p; or 5p,twocolumn
%% for a journal layout:
%% \documentclass[final,1p,times]{elsarticle}
%% \documentclass[final,1p,times,twocolumn]{elsarticle}
%% \documentclass[final,3p,times]{elsarticle}
%% \documentclass[final,3p,times,twocolumn]{elsarticle}
%% \documentclass[final,5p,times]{elsarticle}
%% \documentclass[final,5p,times,twocolumn]{elsarticle}

%% if you use PostScript figures in your article
%% use the graphics package for simple commands
%% \usepackage{graphics}
%% or use the graphicx package for more complicated commands
%% \usepackage{graphicx}
%% or use the epsfig package if you prefer to use the old commands
%% \usepackage{epsfig}

%% The amssymb package provides various useful mathematical symbols
\usepackage{amssymb}
\usepackage{bbding}
%% The amsthm package provides extended theorem environments
%% \usepackage{amsthm}

%% The lineno packages adds line numbers. Start line numbering with
%% \begin{linenumbers}, end it with \end{linenumbers}. Or switch it on
%% for the whole article with \linenumbers after \end{frontmatter}.
%% \usepackage{lineno}

%% natbib.sty is loaded by default. However, natbib options can be
%% provided with \biboptions{...} command. Following options are
%% valid:

%%   round  -  round parentheses are used (default)
%%   square -  square brackets are used   [option]
%%   curly  -  curly braces are used      {option}
%%   angle  -  angle brackets are used    <option>
%%   semicolon  -  multiple citations separated by semi-colon
%%   colon  - same as semicolon, an earlier confusion
%%   comma  -  separated by comma
%%   numbers-  selects numerical citations
%%   super  -  numerical citations as superscripts
%%   sort   -  sorts multiple citations according to order in ref. list
%%   sort&compress   -  like sort, but also compresses numerical citations
%%   compress - compresses without sorting
%%
%% \biboptions{comma,round}

% \biboptions{}
\makeatletter
 \@addtoreset{equation}{section}
 
\makeatother
\journal{Advances in Mathematics (China)}

\newcommand{\Z}{\mathbb{Z}}
\newcommand{\R}{\mathbb{R}}
\newtheorem{df}{Definition}[section]
\newtheorem{pr}[df]{Proposition}
\newtheorem{thm}[df]{Theorem}
\newtheorem{lm}[df]{Lemma}

\begin{document}

\begin{frontmatter}

%% Title, authors and addresses

%% use the tnoteref command within \title for footnotes;
%% use the tnotetext command for the associated footnote;
%% use the fnref command within \author or \address for footnotes;
%% use the fntext command for the associated footnote;
%% use the corref command within \author for corresponding author footnotes;
%% use the cortext command for the associated footnote;
%% use the ead command for the email address,
%% and the form \ead[url] for the home page:
%%
%% \title{Title\tnoteref{label1}}
%% \tnotetext[label1]{}
%% \author{Name\corref{cor1}\fnref{label2}}
%% \ead{email address}
%% \ead[url]{home page}
%% \fntext[label2]{}
%% \cortext[cor1]{}
%% \address{Address\fnref{label3}}
%% \fntext[label3]{}
\title{On the Standard Lattices\tnoteref{label1}}
\tnotetext[label1]{Project supported by NSFC (Grant No. 61370187) and by NSFC-Genertec
Joint Fund For Basic Research (Grant No. U1636104)}

\cortext[cor1]{Corresponding author}
\author[focal]{Rongquan Feng\corref{cor1}}
\ead{fengrq@math.pku.edu.cn}

\author[rvt]{Longke Tang}
\ead{tanglongke@pku.edu.cn}

\author[rvt]{Kun Wang}
\ead{wangkunmath@163.com}

\address[focal]{LMAM, School of Mathematical Sciences, Peking University, Beijing 100871, China}
\address[rvt]{School of Mathematical Sciences, Peking University, Beijing 100871, China}

\date{}

%\maketitle

\begin{abstract}
A lattice in the Euclidean space is standard if it has a
basis consisting vectors whose norms equal to the length in its successive minima. In this paper, it is shown that with the $L^2$ norm all lattices of dimension $n$ are
standard if and only if $n\leqslant 4$. It is also proved that with an arbitrary norm, every lattice of dimensions 1 and 2 is standard.
An example of non-standard lattice of dimension $n\geqslant 3$ is given when the lattice is with the $L^1$ norm.

\vspace*{.5cm}
\noindent MSC (2010): 11H06, 52C05, 52C07
\end{abstract}

\begin{keyword}
lattice, norm, successive minima, standard
%% MSC codes here, in the form: \MSC code \sep code
%% or \MSC[2008] code \sep code (2000 is the default)

\end{keyword}

\end{frontmatter}

%%
%% Start line numbering here if you want
%%
% \linenumbers

%% main text

\section{Introduction}
A lattice is a discrete additive subgroup of a finitely dimensional $\R$-vector space, and geometry of
numbers is the theory that occupies itself with lattices. Since the publication of
Hermann Minkowski's {\em Geometrie der Zahlen} in 1896 (\cite{Mi}), lattices have become a
standard tool in number theory, especially in the areas of diophantine approximation,
algebraic number theory, and the arithmetic theory of quadratic forms (\cite{L}). Despite
their apparent simplicity, lattices hide a rich combinatorial structure,
which has attracted the attention of great mathematicians over the
last two centuries. Now lattices have found numerous applications not only
in mathematics, but also in computer science and in cryptography (\cite{MG}, \cite{SCB}, \cite{Si}).

Take $m$ linearly independent vectors in an $n$-dimensional Euclidean
space $\R^n$ $(m\leqslant n)$. The set of all $\Z$-linear
combinations of (or the abelian group generated by) these vectors is
called a lattice in $\R^n$. If $m=n$, the lattice is
said to be of full-rank. All lattices in this article are of
full-rank, and if not specified, $\R^n$ is with the Euclidean
metric, i.e., the ordinary $L^2$ norm. More precisely, we have the following definition:

\begin{df}
Let $\boldsymbol{b}_1,\ldots,\boldsymbol{b}_n$ be $n$ linearly
independent vectors in $\R^n$. The set
$$\Lambda=L(\boldsymbol{b}_1,\ldots,\boldsymbol{b}_n)=\left\{\sum_{i=1}^nx_i\boldsymbol{b}_i\;\Bigg|\; x_i\in\Z,\;i=1,\ldots,n\right\}$$
is called the \textbf{lattice} generated by
$\boldsymbol{b}_1,\ldots,\boldsymbol{b}_n$, and
$\boldsymbol{b}_1,\ldots,\boldsymbol{b}_n$ is called a
\textbf{basis} of $\Lambda$.
\end{df}

Obviously, a lattice (as a set of points or vectors in a Euclidean
space) has many different bases. The following proposition comes
directly from the above definition.

\begin{pr}
Let $B=\{\boldsymbol{b}_1,\ldots,\boldsymbol{b}_n\}$ and
$C=\{\boldsymbol{c}_1,\ldots,\boldsymbol{c}_n\}$ be two sets of
linearly independent vectors in $\R^n$. Then they generate the same
lattice (i.e. $C$ is a basis of the lattice generated by $B$) if and
only if they can express each other $\Z$-linearly, i.e. there exist
$u_{ij},v_{ij}\in\Z$ $(1\leqslant i,j\leqslant n)$ such that
$$\boldsymbol{b}_i=\sum_{j=1}^nu_{ij}\boldsymbol{c}_j,\;\;\mbox{and}\;\;\boldsymbol{c}_i=\sum_{j=1}^nv_{ij}\boldsymbol{b}_j\quad(1\leqslant i\leqslant n).$$
\end{pr}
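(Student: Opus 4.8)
The plan is to derive both implications straight from the definition of a lattice as the set of all $\Z$-linear combinations of a basis, so the argument is purely a matter of unwinding definitions; no auxiliary machinery is needed.

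First I would prove the forward implication. Assume $B$ and $C$ generate the same lattice $\Lambda$. Each $\boldsymbol{b}_i$ is an element of $\Lambda=L(\boldsymbol{c}_1,\ldots,\boldsymbol{c}_n)$, so by the very definition of that lattice it can be written as $\boldsymbol{b}_i=\sum_{j=1}^n u_{ij}\boldsymbol{c}_j$ with all $u_{ij}\in\Z$; this produces the integer matrix $U=(u_{ij})$. Interchanging the roles of $B$ and $C$, each $\boldsymbol{c}_i\in\Lambda=L(\boldsymbol{b}_1,\ldots,\boldsymbol{b}_n)$ gives $\boldsymbol{c}_i=\sum_{j=1}^n v_{ij}\boldsymbol{b}_j$ with all $v_{ij}\in\Z$, which is the required matrix $V=(v_{ij})$.

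Next I would prove the converse. Suppose integers $u_{ij},v_{ij}$ with the stated relations exist. The relation $\boldsymbol{b}_i=\sum_j u_{ij}\boldsymbol{c}_j$ shows $\boldsymbol{b}_i\in L(C)$ for every $i$; since $L(B)$ consists exactly of the $\Z$-linear combinations of the $\boldsymbol{b}_i$ and $L(C)$ is closed under such combinations, we get $L(B)\subseteq L(C)$. The relations $\boldsymbol{c}_i=\sum_j v_{ij}\boldsymbol{b}_j$ give $L(C)\subseteq L(B)$ by the same reasoning, whence $L(B)=L(C)$. As $C$ is linearly independent by hypothesis, this is precisely the assertion that $C$ is a basis of the lattice generated by $B$.

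There is no genuine obstacle here — the statement is essentially a reformulation of the definition. The only point worth recording is that substituting one family of relations into the other and invoking the linear independence of $\boldsymbol{b}_1,\ldots,\boldsymbol{b}_n$ forces $\sum_j u_{ij}v_{jk}=\delta_{ik}$, i.e. $UV=I_n$; hence $U,V\in GL_n(\Z)$ and $\det U=\det V=\pm1$. This change-of-basis fact is not needed for the proposition itself, but it is the form in which it is typically applied later in the paper.
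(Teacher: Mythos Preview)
Your proof is correct and is exactly the direct unwinding of the definition that the paper has in mind: the paper does not actually write out a proof of this proposition, it only remarks that it ``comes directly from the above definition'' and states the result. Your two-implication argument is the natural way to make that remark precise, and the additional observation that $UV=I_n$ (hence $U,V\in GL_n(\Z)$) is a useful supplement even though the paper does not record it here.
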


For a subset $\Lambda\subseteq\R^n$, in general it is not convenient to determine whether $\Lambda$ is a lattice by finding its basis.
Noticing the abelian group structure of $\R^n$, we have the
following theorem.

\begin{thm}(\S3.2, \cite{GL})
For a subset $\Lambda\subseteq\R^n$, $\Lambda$ is a lattice in
$\R^n$ if and only if $\Lambda$ is a discrete additive subgroup of
$\R^n$ and not contained in any $(n-1)$-dimensional subspace of
$\R^n$.
\end{thm}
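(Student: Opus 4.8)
The plan is to prove the two directions separately. The forward direction is routine: if $\Lambda=L(\boldsymbol b_1,\ldots,\boldsymbol b_n)$ then it is an additive subgroup by construction, it spans $\R^n$ (since the $\boldsymbol b_i$ are linearly independent) and hence lies in no $(n-1)$-dimensional subspace, and it is discrete because the linear isomorphism $T\colon\R^n\to\R^n$ with $T\boldsymbol e_i=\boldsymbol b_i$ is a homeomorphism carrying the discrete set $\Z^n$ onto $\Lambda$.

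For the converse, suppose $\Lambda$ is a discrete additive subgroup of $\R^n$ that lies in no $(n-1)$-dimensional subspace. First I would record that, $\Lambda$ being a subgroup with $\boldsymbol 0$ isolated, it is uniformly discrete: if $B(\boldsymbol 0,r)\cap\Lambda=\{\boldsymbol 0\}$ then any two distinct points of $\Lambda$ are at distance at least $r$, so $\Lambda$ is closed and every bounded subset of $\R^n$ contains only finitely many points of $\Lambda$. I then induct on $n$. For $n=1$: $\Lambda\neq\{\boldsymbol 0\}$, the infimum $d$ of the positive elements of $\Lambda$ is positive (otherwise $\boldsymbol 0$ is not isolated) and attained (subtract two elements of $\Lambda\cap[d,2d)$ to contradict its minimality), whence the division algorithm gives $\Lambda=\Z d$. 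For $n\geqslant2$: pick a nonzero $\boldsymbol a\in\Lambda$, put $W=\R\boldsymbol a$, observe $\Lambda\cap W$ is a discrete subgroup of the line $W$ so that $\Lambda\cap W=\Z\boldsymbol b_1$ by the case $n=1$, and let $\pi\colon\R^n\to\R^n/W\cong\R^{n-1}$ be the quotient map.

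The heart of the argument --- and the step I expect to be the main obstacle --- is to show that $\pi(\Lambda)$ is again a discrete subgroup (it spans $\R^{n-1}$ because $\pi$ is onto and $\Lambda$ spans $\R^n$), the worry being that projection might squeeze lattice points together. To rule this out I would suppose $\pi(\boldsymbol x_k)$ are distinct nonzero elements of $\pi(\Lambda)$ tending to $\boldsymbol 0$; subtracting suitable integer multiples of $\boldsymbol b_1$ (which changes neither $\pi(\boldsymbol x_k)$ nor the distinctness of the $\boldsymbol x_k$) forces all $\boldsymbol x_k$ into a fixed bounded slab, contradicting the local finiteness above. Granting $\pi(\Lambda)$ discrete, the induction hypothesis gives $\pi(\Lambda)=L(\bar{\boldsymbol c}_2,\ldots,\bar{\boldsymbol c}_n)$ for a basis $\bar{\boldsymbol c}_2,\ldots,\bar{\boldsymbol c}_n$ of $\R^{n-1}$; lift each $\bar{\boldsymbol c}_i$ to $\boldsymbol b_i\in\Lambda$. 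Then $\boldsymbol b_1,\ldots,\boldsymbol b_n$ are linearly independent ($\boldsymbol b_1\neq\boldsymbol 0$ spans $\ker\pi$, while $\pi(\boldsymbol b_2),\ldots,\pi(\boldsymbol b_n)$ are independent) and generate $\Lambda$ (for $\boldsymbol x\in\Lambda$ write $\pi(\boldsymbol x)=\sum_{i\geqslant2}m_i\bar{\boldsymbol c}_i$, so $\boldsymbol x-\sum_{i\geqslant2}m_i\boldsymbol b_i\in\Lambda\cap W=\Z\boldsymbol b_1$). Hence $\Lambda$ is a lattice, and the induction is complete.
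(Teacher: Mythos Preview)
Your argument is correct and is essentially the standard proof of this characterisation: the forward direction is immediate, and for the converse you induct on $n$, handle $n=1$ by the usual infimum-plus-division-algorithm argument, and for the inductive step quotient by a rank-one sublattice $\Z\boldsymbol b_1$, verify that the image $\pi(\Lambda)$ remains discrete by a bounded-slab compactness argument, apply the induction hypothesis, and lift. Each step is sound; in particular your treatment of the only delicate point (discreteness of $\pi(\Lambda)$) is fine once one notes that distinct $\pi(\boldsymbol x_k)$ force the reduced $\boldsymbol x_k$ to be distinct, so local finiteness applies.

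There is nothing to compare against, however: the paper does not prove this theorem at all. It is quoted as a known result with a reference to \S3.2 of Gruber--Lekkerkerker, \emph{Geometry of Numbers}, and no argument is given in the paper itself. Your write-up therefore supplies a complete proof where the paper simply cites one.
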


For any vector $\boldsymbol{x}\in \R^n$, denote by $\|\boldsymbol{x}\|$ the length (norm) of $\boldsymbol{x}$. Let $N(r)=\{\boldsymbol{x}\in\R^n\mid\|\boldsymbol{x}\|\leqslant
r\}$ be the closed ball of radius $r$ centered at the origin.

\begin{df}
Let $\Lambda$ be a lattice in $\R^n$. For $i=1,\ldots,n$, let
$$\lambda_i=\inf\{r\mid\dim\,{\rm Span}(\Lambda\cap N(r))\geqslant i\},$$
where ${\rm Span}(X)$
denotes the subspace of $\R^n$ spanned by the set $X$. The sequence $\lambda_1,\ldots,\lambda_n$ is called the
\textbf{successive minima} of $\Lambda$.
\end{df}

It can be easily seen from the definition that the successive minima of a
lattice is unique.

\begin{df}
Let $\Lambda$ be a lattice in $\R^n$ generated by the basis
$\boldsymbol{b}_1,\ldots,\boldsymbol{b}_n$, and let
$\lambda_1,\ldots,\lambda_n$ be the successive minima of $\Lambda$.
If $$\|\boldsymbol{b}_i\|=\lambda_i,\quad \mbox{for all}\; 1\leqslant i\leqslant n,$$ then the basis
$\boldsymbol{b}_1,\ldots,\boldsymbol{b}_n$ is said to
\textbf{achieve} its successive minima.
\end{df}

By discreteness and the definition of the successive minima, we have the following theorem.

\begin{thm}(\S3.3, \cite{S})
For a lattice $\Lambda$ in $\R^n$ with the successive minima
$\lambda_1,\ldots,\lambda_n$, there exist $\R$-linearly
independent vectors $\boldsymbol{u}_1,\cdots,\boldsymbol{u}_n\in\Lambda$ satisfies
$\|\boldsymbol{u}_i\|=\lambda_i$ for all $i=1,\ldots,n$.
\end{thm}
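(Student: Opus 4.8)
The plan is to first use the discreteness of $\Lambda$ to show that the infimum defining each $\lambda_i$ is in fact attained, and then to produce $\boldsymbol{u}_1,\ldots,\boldsymbol{u}_n$ one at a time, each as a shortest lattice vector lying outside the span of the ones already chosen. I would open by recording that $\lambda_1\leqslant\lambda_2\leqslant\cdots\leqslant\lambda_n$, which is immediate since the sets $\{r\mid\dim\operatorname{Span}(\Lambda\cap N(r))\geqslant i\}$ shrink as $i$ grows. The first substantive step is the lemma that $\dim\operatorname{Span}(\Lambda\cap N(\lambda_i))\geqslant i$ for each $i$, i.e. the defining infimum is a minimum. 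To see this, observe that each ball $N(\lambda_i+1/k)$ is bounded, so by discreteness $\Lambda\cap N(\lambda_i+1/k)$ is finite; these finite sets form a decreasing chain in $k$ and hence stabilize to a finite set $S$, and one checks that $S=\Lambda\cap N(\lambda_i)$, since a lattice point of norm $\leqslant\lambda_i+1/k$ for all large $k$ has norm $\leqslant\lambda_i$. Because $\dim\operatorname{Span}(\Lambda\cap N(r))\geqslant i$ for every $r>\lambda_i$ --- by the definition of the infimum together with monotonicity in $r$ --- taking $k$ large gives $\dim\operatorname{Span}(S)\geqslant i$, which is the lemma.

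Next I would build $\boldsymbol{u}_1,\ldots,\boldsymbol{u}_n$ by induction on $i$. Suppose $\boldsymbol{u}_1,\ldots,\boldsymbol{u}_{i-1}\in\Lambda$ have been chosen, linearly independent with $\|\boldsymbol{u}_j\|=\lambda_j$, and set $W=\operatorname{Span}(\boldsymbol{u}_1,\ldots,\boldsymbol{u}_{i-1})$ (so $W=\{\boldsymbol{0}\}$ when $i=1$). Since $\Lambda$ has full rank while $\dim W=i-1<n$, the set $\Lambda\setminus W$ is nonempty, and its intersection with any bounded ball is finite, so it contains a vector $\boldsymbol{u}_i$ of minimal norm; write $\mu_i=\|\boldsymbol{u}_i\|$. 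As $\boldsymbol{u}_i\notin W$, the vectors $\boldsymbol{u}_1,\ldots,\boldsymbol{u}_i$ are linearly independent, and it only remains to show $\mu_i=\lambda_i$. For $\mu_i\leqslant\lambda_i$: the lemma gives $\dim\operatorname{Span}(\Lambda\cap N(\lambda_i))\geqslant i>\dim W$, so $\Lambda\cap N(\lambda_i)\not\subseteq W$, and any vector in the difference lies in $\Lambda\setminus W$ and has norm $\leqslant\lambda_i$. For $\mu_i\geqslant\lambda_i$: the minimal norms satisfy $\mu_1\leqslant\mu_2\leqslant\cdots$ because the sets $\Lambda\setminus\operatorname{Span}(\boldsymbol{u}_1,\ldots,\boldsymbol{u}_k)$ shrink as $k$ grows, so by the inductive hypothesis $\|\boldsymbol{u}_j\|=\lambda_j=\mu_j\leqslant\mu_i$ for $j<i$; hence $\boldsymbol{u}_1,\ldots,\boldsymbol{u}_i$ are $i$ linearly independent lattice vectors in $N(\mu_i)$, so $\dim\operatorname{Span}(\Lambda\cap N(\mu_i))\geqslant i$, which by the definition of $\lambda_i$ as the infimum of such radii forces $\mu_i\geqslant\lambda_i$. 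Therefore $\|\boldsymbol{u}_i\|=\mu_i=\lambda_i$, and the induction closes.

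The only step I expect to be a genuine obstacle is the lemma in the first paragraph --- passing rigorously from the infimum to an attained minimum --- for which the finiteness of $\Lambda\cap N(r)$ in a bounded ball (a consequence of discreteness, and essentially the only place it enters) is exactly what is needed. The construction that follows is bookkeeping; the one point to watch is to record that the minimal norms $\mu_i$ are non-decreasing in $i$, so that the previously chosen $\boldsymbol{u}_j$ genuinely lie in the ball $N(\mu_i)$ used to bound $\mu_i$ from below. Since the argument uses only that norm balls are bounded and that $\Lambda$ is discrete, it carries over verbatim to $\R^n$ equipped with any norm, not just the $L^2$ norm.
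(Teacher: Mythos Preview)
Your argument is correct. The paper does not actually supply its own proof of this theorem; it merely cites Siegel and remarks that the result follows ``by discreteness and the definition of the successive minima,'' which is exactly the content of your two steps --- using discreteness to show each infimum $\lambda_i$ is attained, and then greedily choosing $\boldsymbol{u}_i$ as a shortest lattice vector outside $\operatorname{Span}(\boldsymbol{u}_1,\ldots,\boldsymbol{u}_{i-1})$ and verifying $\|\boldsymbol{u}_i\|=\lambda_i$ from the definition.
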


Clearly the vectors $\boldsymbol{u}_1,\ldots,\boldsymbol{u}_n$ above
form an $\R$-basis of $\R^n$, but they might not form a basis of
$\Lambda$.

\begin{df}
A lattice in $\R^n$ is said to be \textbf{standard} if it has a
basis achieving its successive minima; otherwise, the lattice is
said to be \textbf{non-standard}.
\end{df}

Obviously the lattice $\Z^n\subseteq\R^n$ is standard. The following result can be checked easily.

\begin{pr}
If a lattice $\Lambda\subseteq\R^n$ has an orthogonal basis (i.e. a
basis which contains $n$ mutually orthogonal vectors), then
$\Lambda$ is standard, and this basis achieves the successive
minima.
\end{pr}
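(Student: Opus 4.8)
The plan is to fix an orthogonal basis $\boldsymbol{b}_1,\ldots,\boldsymbol{b}_n$ of $\Lambda$, reorder it so that $\|\boldsymbol{b}_1\|\leqslant\cdots\leqslant\|\boldsymbol{b}_n\|$, and prove that $\|\boldsymbol{b}_i\|=\lambda_i$ for every $i$; this establishes both assertions at once (standardness, and that the reordered basis achieves the successive minima).

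First I would record the easy upper bound $\lambda_i\leqslant\|\boldsymbol{b}_i\|$. The vectors $\boldsymbol{b}_1,\ldots,\boldsymbol{b}_i$ all lie in $\Lambda\cap N(\|\boldsymbol{b}_i\|)$ and are linearly independent, so $\dim\,{\rm Span}(\Lambda\cap N(r))\geqslant i$ for $r=\|\boldsymbol{b}_i\|$, and the defining infimum gives $\lambda_i\leqslant\|\boldsymbol{b}_i\|$.

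The real content is the reverse inequality $\lambda_i\geqslant\|\boldsymbol{b}_i\|$, for which I would combine orthogonality with the integrality of coordinates. Any $\boldsymbol{v}\in\Lambda$ can be written $\boldsymbol{v}=\sum_{j=1}^n x_j\boldsymbol{b}_j$ with $x_j\in\Z$, and by the Pythagorean theorem $\|\boldsymbol{v}\|^2=\sum_{j=1}^n x_j^2\|\boldsymbol{b}_j\|^2$. If $\|\boldsymbol{v}\|<\|\boldsymbol{b}_i\|$, then $x_j=0$ for all $j\geqslant i$, since $x_j\neq 0$ would force $\|\boldsymbol{v}\|^2\geqslant\|\boldsymbol{b}_j\|^2\geqslant\|\boldsymbol{b}_i\|^2$. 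Hence every lattice vector of norm strictly below $\|\boldsymbol{b}_i\|$ lies in the $(i-1)$-dimensional subspace ${\rm Span}(\boldsymbol{b}_1,\ldots,\boldsymbol{b}_{i-1})$, so $\dim\,{\rm Span}(\Lambda\cap N(r))\leqslant i-1$ for every $r<\|\boldsymbol{b}_i\|$, which yields $\lambda_i\geqslant\|\boldsymbol{b}_i\|$.

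Putting the two bounds together gives $\lambda_i=\|\boldsymbol{b}_i\|$ for all $i$, so the reordered orthogonal basis achieves the successive minima and $\Lambda$ is standard. I do not expect a genuine obstacle in this argument; the only points needing a little care are the initial reordering of the basis (so the equalities line up with the non-decreasing sequence $\lambda_1\leqslant\cdots\leqslant\lambda_n$) and keeping track of strict versus non-strict inequalities when passing through the infimum in the definition of $\lambda_i$.
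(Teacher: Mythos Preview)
Your argument is correct and complete. The paper does not actually supply a proof of this proposition---it merely states that the result ``can be checked easily''---so there is nothing to compare against; your approach via the Pythagorean identity $\|\boldsymbol{v}\|^2=\sum_j x_j^2\|\boldsymbol{b}_j\|^2$ and the integrality of the $x_j$ is exactly the natural verification the authors presumably had in mind.
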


%\begin{proof}
%Let $\boldsymbol{b}_1,\ldots,\boldsymbol{b}_n$ be an orthogonal
%basis, and let $\lambda_i$ denote the length of $\boldsymbol{b}_i$
%for all $i$. Without loss of generality, assume
%$0<\lambda_1\leqslant\cdots\leqslant\lambda_n$. By definition,
%$\|\boldsymbol{b}_1\|=\lambda_1$,
%\ldots,$\|\boldsymbol{b}_n\|=\lambda_n$ is the successive minima of
%$\Lambda$, so $\Lambda$ is standard.
%\end{proof}

It is natural to discuss whether a given lattice is standard,
especially to determine the dimensions of which all lattices are
standard. By Theorem 1.6, a lattice $\Lambda$ in $\R^n$ contains $n$
linearly independent vectors achieving its successive
minima, and there is a sublattice
$\Lambda^{\prime}$ generated by them. By using the Hadamard inequality and
Minkowski's inequality, an estimate of the order of the
quotient group $\Lambda/\Lambda^{\prime}$ was given in \cite{M}. This gave the result that all lattices of
dimensions not greater than $3$ are standard. The case of dimension 4 was also discussed in \cite{M}.

In this paper,
we will give a direct geometric proof that with the $L^2$ norm all lattices of
dimension $n$ are standard if and only if $n\leqslant 4$ in Sections 2 and 3. A brief discussion on lattices in $\R^n$
of arbitrary norms is given in Section 4.

\section{High-dimensional Cases}

We first give an example of non-standard lattice when the dimension
is greater than 4.

\begin{thm}\label{ce}
For $n\geqslant5$, there exist non-standard lattices in $\R^n$.
\end{thm}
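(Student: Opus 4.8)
The plan is to exhibit, for each $n\geqslant 5$, an explicit lattice whose successive minima are all equal (say to $1$, after scaling) but for which no basis consists entirely of minimal vectors. The classical place to look is the root lattice $D_n$ (or a close relative), realized as
$$D_n=\left\{\boldsymbol{x}\in\Z^n\ \Big|\ \textstyle\sum_{i=1}^n x_i\equiv 0\pmod 2\right\}.$$
Its minimal vectors have $L^2$-norm $\sqrt2$ and are exactly the $\pm\boldsymbol{e}_i\pm\boldsymbol{e}_j$; one checks these span $\R^n$, so $\lambda_1=\cdots=\lambda_n=\sqrt2$. The point is that the determinant of $D_n$ is $2$, while any $n$ vectors of the form $\pm\boldsymbol{e}_i\pm\boldsymbol{e}_j$ generate a sublattice whose determinant is a multiple of... actually $D_n$ itself does have a basis of minimal vectors for every $n$ (e.g. $\boldsymbol{e}_1+\boldsymbol{e}_2,\ \boldsymbol{e}_1-\boldsymbol{e}_2,\ \boldsymbol{e}_2+\boldsymbol{e}_3,\ldots$), so $D_n$ is standard. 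The right example is instead a lattice containing $D_n$ with index $2$, or more simply a "glued" lattice: take the dual-type construction. Concretely, for $n=5$ I would use $\Lambda_5$ generated by $D_5$ together with the vector $\tfrac12(1,1,1,1,1)$ — but that vector has norm $\tfrac{\sqrt5}{2}>\sqrt2$, so it is not minimal and does not by itself spoil standardness. I therefore expect the cleanest route is the following rescaled/rotated example used for $K_{12}$-type phenomena: let $\Lambda$ be generated by the $n$ vectors $2\boldsymbol{e}_1,\boldsymbol{e}_1+\boldsymbol{e}_2,\boldsymbol{e}_2+\boldsymbol{e}_3,\ldots,\boldsymbol{e}_{n-1}+\boldsymbol{e}_n$ together with $\boldsymbol{e}_1+\boldsymbol{e}_2+\cdots+\boldsymbol{e}_n$ suitably chosen so that the minimal vectors all lie in a proper sublattice.

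More carefully, the key steps I would carry out are: (1) write down a specific generating set for a lattice $\Lambda_n\subseteq\R^n$ (for $n=5$ first, then adjoin $\boldsymbol{e}_6,\dots,\boldsymbol{e}_n$ as orthogonal directions, or take an orthogonal sum $\Lambda_5\perp\Z^{n-5}$ — note that an orthogonal sum of a non-standard lattice with $\Z^{n-5}$ is again non-standard, which reduces everything to the case $n=5$); (2) determine $\lambda_1$ by an explicit enumeration of short vectors, using the integrality/congruence conditions defining $\Lambda_5$ to rule out anything shorter; (3) show $\lambda_1=\cdots=\lambda_5=\lambda_1$, i.e. the minimal vectors span $\R^5$; (4) identify the sublattice $M$ generated by \emph{all} minimal vectors and show $[\Lambda_5:M]>1$, typically $=2$, by a determinant computation ($\det\Lambda_5$ versus $\det M$) or by producing a nontrivial linear functional that is integer-valued on all minimal vectors but not on $\Lambda_5$; (5) conclude that no basis of $\Lambda_5$ can consist of minimal vectors, since such a basis would generate $M\subsetneq\Lambda_5$. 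The reduction in step (1) is what makes the "$n\geqslant 5$" uniform: it suffices to produce one non-standard lattice in dimension $5$.

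The main obstacle is step (2)–(4): choosing the example so that the minimal vectors are \emph{numerous enough to span} $\R^5$ (otherwise some $\lambda_i>\lambda_1$ and the lattice might still be standard for a silly reason) yet \emph{confined to a proper sublattice}. This is a genuine tension — in $D_n$ the minimal vectors span and also generate the whole lattice, so one needs a slightly larger lattice in which a "diagonal" direction is covered only by longer vectors while still being spanned over $\R$ by short ones. I expect the verification that there are no unexpected short vectors (the lower bound $\lambda_1\geqslant$ the claimed value) to require the most care, likely via a case analysis on the residues of coordinates modulo $2$, or equivalently by intersecting $\Lambda_5$ with the ball $N(\lambda_1)$ and checking it is contained in the $D_5$-part. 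Once the example is pinned down, steps (3) and (5) are short: spanning is a rank computation on an explicit finite list, and the final contradiction is immediate from the definition of "standard" together with Proposition~1.2 (a basis of minimal vectors would have to $\Z$-generate $\Lambda_5$, hence equal $M$).
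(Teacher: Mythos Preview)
Your overall strategy --- exhibit a lattice whose minimal vectors span $\R^n$ but generate only a proper sublattice --- is exactly right, and the outline in steps (1)--(5) is sound. The gap is that you never carry out step~(1): no concrete lattice is fixed and verified. Ironically, you dismiss the example that works. First, the inequality $\tfrac{\sqrt5}{2}>\sqrt2$ is false ($\tfrac{\sqrt5}{2}\approx 1.118<1.414\approx\sqrt2$). Second, and more importantly, the lattice $L$ generated by $D_5$ together with $\tfrac12(1,1,1,1,1)$ is not what you think: since $2\cdot\tfrac12(1,\dots,1)=(1,\dots,1)\in L$ and $(0,1,1,1,1)\in D_5$, we get $\boldsymbol{e}_1=(1,\dots,1)-(0,1,1,1,1)\in L$, hence $\Z^5\subseteq L$. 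So the minimum of $L$ is $1$, not $\sqrt2$; the minimal vectors are exactly the $\pm\boldsymbol{e}_i$, which span $\R^5$ but generate only $\Z^5\subsetneq L$. This is precisely the configuration your steps (2)--(5) were seeking, and your reason for discarding it (``the glue vector is not minimal'') is beside the point: what matters is where \emph{all} minimal vectors sit, not whether one particular generator is minimal.

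The paper uses this very lattice, rescaled by $2$ and written directly for every $n\geqslant5$: set
\[
\Lambda_n=\{(a_1,\dots,a_n)\in\Z^n\mid a_1\equiv\cdots\equiv a_n\pmod2\}=2\Z^n\cup\bigl(2\Z^n+(1,\dots,1)\bigr).
\]
A one-line parity split shows every nonzero vector has norm $\geqslant2$ (if all $a_i$ are even, some $|a_j|\geqslant2$; if all are odd, $\|\boldsymbol a\|\geqslant\sqrt n$), with equality attained by $2\boldsymbol{e}_1,\dots,2\boldsymbol{e}_n$, so $\lambda_1=\cdots=\lambda_n=2$. Any basis of $\Lambda_n$ must contain a vector with odd coordinates (else $(1,\dots,1)$ is not in its $\Z$-span), and such a vector has norm $\geqslant\sqrt n>2$ for $n\geqslant5$. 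This handles all $n$ at once, so your orthogonal-sum reduction to $n=5$ is unnecessary --- though it is valid provided you scale the $\Z^{n-5}$ factor to match $\lambda_1(\Lambda_5)$ and check that every minimal vector of the orthogonal sum lies entirely in one summand.
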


\begin{proof}
For $n\geqslant5$, let
$$\Lambda_n=\{(a_1,\ldots,a_n)\in\Z^n\mid a_1\equiv\cdots\equiv a_n\pmod{2}\};$$
then $\Lambda_n$ is a discrete additive subgroup of $\R^n$ not
contained in any $(n-1)$-dimensional subspace. By Theorem 1.3 it is
a lattice. In fact, it has $(2,0,\ldots,0)$, $(0,2,0,\ldots,0)$,
\ldots, $(0,\ldots,0,2,0)$, $(1,1,\ldots,1)$ as a basis.

On the one hand, note that for all nonzero $\boldsymbol{a}=(a_1,\ldots,a_n)\in\Lambda_n$, if
$$a_1\equiv\cdots\equiv a_n\equiv1\pmod{2},$$
then $a_i\ne0$ for all $i$, so
$$\|\boldsymbol{a}\|=\sqrt{\sum_{i=1}^na_i^2}\geqslant\sqrt{n}\geqslant\sqrt{5}>2;$$
if
$$a_1\equiv\cdots\equiv a_n\equiv0\pmod{2},$$
then there exists some $j\in\{1,2,\ldots,n\}$ with $a_j\ne0$, so
$$\|\boldsymbol{a}\|\geqslant|a_j|\geqslant2.$$
Note also that there are $n$ linearly independent vectors of length 2
in $\Lambda_n$, e.g. $2\boldsymbol{e}_1,\ldots,2\boldsymbol{e}_n$,
where $\boldsymbol{e}_i$ is the vector whose $i$-th coordinate is 1 and all other coordinates are 0's,
so the successive minima of $\Lambda_n$ is
$\lambda_1=\cdots=\lambda_n=2$.

On the other hand, note that every basis
$\boldsymbol{b}_1,\ldots,\boldsymbol{b}_n$ of $\Lambda_n$ must
contain a vector with odd coordinates. In fact, if
$\boldsymbol{b}_1,\ldots,\boldsymbol{b}_n$ all have even
coordinates, then $(1,1,\ldots,1)\in\Lambda_n$ cannot be expressed
as a $\Z$-linear combination of them, which is a contradiction.
Suppose that $\boldsymbol{b}_k$ has odd coordinates, by the previous
discussion, we have
$$\|\boldsymbol{b}_k\|\geqslant\sqrt{n}\geqslant\sqrt{5}>2;$$
therefore, no basis $\boldsymbol{b}_1,\ldots,\boldsymbol{b}_n$ of
$\Lambda_n$ can achieve the successive minima, which shows that
$\Lambda_n$ is non-standard.
\end{proof}

Note that van der Waerden \cite{vdW} gave the same counterexample in 1956 and it can also be found in \cite{MG}.

\section{Low-dimensional Cases}

In this section, we will prove that all lattices of dimension less than 5 are standard.

\begin{pr}\label{d1}
Every lattice in $\R^1$ is standard.
\end{pr}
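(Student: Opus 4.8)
The plan is to show that a one-dimensional lattice is generated by a single shortest nonzero vector, so that its unique basis vector (up to sign) automatically achieves the successive minimum $\lambda_1$.

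First I would observe that a lattice $\Lambda$ in $\R^1$ is, by definition, a full-rank lattice, hence of the form $L(\boldsymbol{b})$ for some nonzero $\boldsymbol{b}\in\R^1$; every basis of $\Lambda$ consists of a single vector, and by Proposition 1.2 any two such vectors $\boldsymbol{b},\boldsymbol{c}$ satisfy $\boldsymbol{b}=u\boldsymbol{c}$ and $\boldsymbol{c}=v\boldsymbol{b}$ with $u,v\in\Z$, forcing $uv=1$, so $\boldsymbol{c}=\pm\boldsymbol{b}$ and in particular $\|\boldsymbol{c}\|=\|\boldsymbol{b}\|$. Thus all bases have the same norm; call it $\beta=\|\boldsymbol{b}\|$.

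Next I would identify $\beta$ with $\lambda_1$. By discreteness (Theorem 1.3) the set of norms $\{\|\boldsymbol{x}\|:\boldsymbol{x}\in\Lambda,\ \boldsymbol{x}\neq\boldsymbol0\}$ has a positive minimum, and by Theorem 1.6 there is a vector $\boldsymbol{u}_1\in\Lambda$ with $\|\boldsymbol{u}_1\|=\lambda_1$. Since $\boldsymbol{u}_1\in\Lambda=\Z\boldsymbol{b}$ we have $\boldsymbol{u}_1=m\boldsymbol{b}$ for some nonzero integer $m$, so $\lambda_1=\|\boldsymbol{u}_1\|=|m|\,\beta\geqslant\beta$; on the other hand $\boldsymbol{b}\in\Lambda$ is a nonzero lattice vector, so $\beta\geqslant\lambda_1$ by minimality. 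Hence $\beta=\lambda_1$, and the basis $\{\boldsymbol{b}\}$ achieves the successive minimum, so $\Lambda$ is standard.

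There is essentially no obstacle here: the only mild subtlety is making sure one invokes the right prior results (Proposition 1.2 to pin down the basis, Theorem 1.6 to guarantee a norm-minimizing vector exists) rather than re-proving discreteness from scratch. The argument is short enough that I would likely present it in two or three sentences, perhaps even folding it into the observation that in dimension one a basis vector is forced to be a shortest nonzero lattice vector.
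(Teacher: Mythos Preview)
Your proof is correct and follows essentially the same approach as the paper: both show that a basis vector $\boldsymbol{b}$ of a one-dimensional lattice realizes $\lambda_1$ because every nonzero lattice vector is an integer multiple of $\boldsymbol{b}$ and hence has norm at least $\|\boldsymbol{b}\|$. The paper's version is simply the compressed form you anticipate in your final paragraph, computing $\lambda_1=\inf_{k\in\Z\setminus\{0\}}\|k\boldsymbol{b}_1\|=\|\boldsymbol{b}_1\|$ directly without invoking Proposition~1.2 or Theorem~1.6.
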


\begin{proof}
Let $\boldsymbol{b}_1$ be a basis of a lattice
$\Lambda\subseteq\R^1$; then
$$\Lambda=L(\boldsymbol{b}_1)=\{k\boldsymbol{b}_1\mid k\in\Z\}.$$
Obviously,
$$\lambda_1=\inf\{\|k\boldsymbol{b}_1\|\mid k\in\Z\setminus0\}=\|\boldsymbol{b}_1\|,$$
so $\Lambda$ is standard.
\end{proof}

In order to discuss lattices of other dimensions, we give a lemma about the structure of a lattice in $\R^n$ firstly.

\begin{lm}\label{bd}
Let $\Lambda$ be a lattice in $\R^n$ generated by
$\boldsymbol{b}_1,\ldots,\boldsymbol{b}_n$, and let $\lambda>0$
satisfy
$$\|\boldsymbol{b}_i\|\leqslant\lambda\;\mbox{for all}\; 1\leqslant i\leqslant n.$$
Then for any $\boldsymbol{v}\in\R^n$, we have
$$\min_{\boldsymbol{u}\in\Lambda}\|\boldsymbol{v}-\boldsymbol{u}\|\leqslant\frac{\sqrt{n}}{2}\lambda;$$
(the minimum value exists because of the discreteness of $\Lambda$). Furthermore, the equality holds only when
$\boldsymbol{b}_1,\ldots,\boldsymbol{b}_n$ are mutually orthogonal,
$\|\boldsymbol{b}_i\|=\lambda$ for all $1\leqslant i\leqslant n$, and
$$\boldsymbol{v}=\sum_{i=1}^n\left(a_i+\frac{1}{2}\right)\boldsymbol{b}_i$$
for some $a_i\in\Z$, $1\leqslant i\leqslant n$.
\end{lm}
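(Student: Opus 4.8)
The plan is to reduce everything to the one-dimensional situation by a volume/covering argument, and then to track the equality case carefully. First I would set up coordinates adapted to the basis: writing $\boldsymbol{v} = \sum_{i=1}^n t_i \boldsymbol{b}_i$ with $t_i \in \R$, the lattice points $\boldsymbol{u} \in \Lambda$ are exactly $\sum_i a_i \boldsymbol{b}_i$ with $a_i \in \Z$, so $\boldsymbol{v} - \boldsymbol{u} = \sum_i (t_i - a_i)\boldsymbol{b}_i$. By choosing each $a_i$ to be the nearest integer to $t_i$, we may assume $|t_i - a_i| \leqslant \tfrac12$ for every $i$; equivalently, it suffices to bound $\bigl\| \sum_i s_i \boldsymbol{b}_i \bigr\|$ for $|s_i| \leqslant \tfrac12$. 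By the triangle inequality this is at most $\sum_i |s_i|\,\|\boldsymbol{b}_i\| \leqslant \tfrac12 \sum_i \|\boldsymbol{b}_i\|$, but that only gives the weak bound $\tfrac{n}{2}\lambda$, not $\tfrac{\sqrt n}{2}\lambda$; the saving by a factor $\sqrt n$ is exactly the point where orthogonality enters, so the triangle inequality alone is insufficient.

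Instead I would use the Gram--Schmidt orthogonalization $\boldsymbol{b}_i^{*}$ of $\boldsymbol{b}_1,\ldots,\boldsymbol{b}_n$, and prove the bound by induction on $n$. The key observation is $\|\boldsymbol{b}_i^{*}\| \leqslant \|\boldsymbol{b}_i\| \leqslant \lambda$ for every $i$. Projecting $\boldsymbol{v}$ onto the orthogonal complement of $\mathrm{Span}(\boldsymbol{b}_1,\ldots,\boldsymbol{b}_{n-1})$, one first chooses the integer coefficient $a_n$ of $\boldsymbol{b}_n$ so that the component of $\boldsymbol{v} - a_n \boldsymbol{b}_n$ in the $\boldsymbol{b}_n^{*}$ direction has length at most $\tfrac12\|\boldsymbol{b}_n^{*}\| \leqslant \tfrac12\lambda$; then $\boldsymbol{v} - a_n\boldsymbol{b}_n$ lies within distance $\tfrac12\lambda$ of the hyperplane $\mathrm{Span}(\boldsymbol{b}_1,\ldots,\boldsymbol{b}_{n-1})$, and applying the inductive hypothesis to the sublattice $L(\boldsymbol{b}_1,\ldots,\boldsymbol{b}_{n-1})$ in that hyperplane produces a lattice vector within distance $\tfrac{\sqrt{n-1}}{2}\lambda$ of the foot of the perpendicular. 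Combining the two contributions by the Pythagorean theorem gives a lattice vector within distance $\sqrt{\tfrac{1}{4} + \tfrac{n-1}{4}}\,\lambda = \tfrac{\sqrt n}{2}\lambda$ of $\boldsymbol{v}$, completing the induction (the base case $n=1$ being Proposition~\ref{d1}'s computation, $|s_1|\,\|\boldsymbol{b}_1\| \leqslant \tfrac12\lambda$).

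For the equality case I would trace back through each inequality used. Equality in the final Pythagorean step forces equality in both the $n=1$ contribution and the inductive $(n-1)$-dimensional contribution; equality $\|\boldsymbol{b}_n^{*}\| = \lambda$ together with $\|\boldsymbol{b}_n^{*}\| \leqslant \|\boldsymbol{b}_n\| \leqslant \lambda$ forces $\boldsymbol{b}_n = \boldsymbol{b}_n^{*}$, i.e. $\boldsymbol{b}_n$ is orthogonal to $\boldsymbol{b}_1,\ldots,\boldsymbol{b}_{n-1}$, and forces the $\boldsymbol{b}_n^{*}$-component of $\boldsymbol{v}$ to be a half-integer multiple of $\boldsymbol{b}_n$; the inductive hypothesis then says $\boldsymbol{b}_1,\ldots,\boldsymbol{b}_{n-1}$ are mutually orthogonal of length $\lambda$ and the relevant projection of $\boldsymbol{v}$ is $\sum_{i=1}^{n-1}(a_i + \tfrac12)\boldsymbol{b}_i$, which assembles into the claimed description of $\boldsymbol{v}$. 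The main obstacle is bookkeeping in this equality analysis: one must be careful that the coefficient $a_n$ chosen at the top level and the coefficients produced by the recursive call genuinely combine into a single lattice vector (they do, because $\boldsymbol{b}_n \perp \boldsymbol{b}_1,\ldots,\boldsymbol{b}_{n-1}$ in the equality case means the two reductions decouple), and that "equality holds only when" is read as a necessary condition, so I do not need to verify the converse, only that every equality case has the stated form.
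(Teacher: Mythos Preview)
Your proposal is correct and follows essentially the same inductive argument as the paper: choose the last coefficient $a_n$ to make the component of $\boldsymbol{v}-a_n\boldsymbol{b}_n$ orthogonal to $H=\mathrm{Span}(\boldsymbol{b}_1,\ldots,\boldsymbol{b}_{n-1})$ at most $\tfrac12\lambda$, apply the inductive hypothesis inside $H$, and combine via Pythagoras; the equality analysis is likewise the same. The only cosmetic difference is that you phrase the orthogonal decomposition in Gram--Schmidt language ($\boldsymbol{b}_n^{*}$) whereas the paper writes out the projections $\boldsymbol{v}'$, $\boldsymbol{b}_n'$ onto $H$ explicitly, but $\boldsymbol{b}_n^{*}=\boldsymbol{b}_n-\boldsymbol{b}_n'$ so these are identical. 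Your worry at the end about whether the recursively produced coefficients ``combine into a single lattice vector'' is unnecessary: the lattice point is simply $\boldsymbol{u}'+a_n\boldsymbol{b}_n\in\Lambda$, and no orthogonality is needed for that.
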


\begin{proof}
Use induction on $n$. For $n=1$ the lemma is obvious. Assume it is
true for dimension $n-1$. Since
$\boldsymbol{b}_1,\ldots,\boldsymbol{b}_n$ ia a basis of the
$\R$-linear vector space $\R^n$, we have that
$$\boldsymbol{v}=\sum_{i=1}^nc_i\boldsymbol{b}_i,$$
where $c_i\in\R$, $1\leqslant i\leqslant n$. Take $a_n\in\Z$ with $|a_n-c_n|\leqslant\frac{1}{2}$. Let
$\boldsymbol{v}'$ and $\boldsymbol{b}'_n$ be
the orthogonal projections of $\boldsymbol{v}$ and $\boldsymbol{b}_n$ respectively
onto the hyperplane $H$ spanned by
$\boldsymbol{b}_1,\ldots,\boldsymbol{b}_{n-1}$. The lattice
$L=L(\boldsymbol{b}_1,\ldots,\boldsymbol{b}_{n-1})$ generated by
$\boldsymbol{b}_1,\ldots,\boldsymbol{b}_{n-1}$ is a lattice in $H$
of dimension $n-1$. By the induction hypothesis, since
$\boldsymbol{v}'-a_n\boldsymbol{b}'_n\in H$, there exists a vector
$\boldsymbol{u}'\in L$ such that
$$\|\boldsymbol{v}'-a_n\boldsymbol{b}'_n-\boldsymbol{u}'\|=\min_{\boldsymbol{w}\in L}\|\boldsymbol{v}'-a_n\boldsymbol{b}'_n-\boldsymbol{w}\|\leqslant\frac{\sqrt{n-1}}{2}\lambda.$$
Projecting to the line orthogonal to the above hyperplane $H$, we
have
$$\boldsymbol{v}'=\sum_{i=1}^{n-1}c_i\boldsymbol{b}_i+c_n\boldsymbol{b}_n',$$
and then $\boldsymbol{v}-\boldsymbol{v}'=c_n(\boldsymbol{b}_n-\boldsymbol{b}'_n)$.
Since $(\boldsymbol{b}_n-\boldsymbol{b}'_n)\bot H$,
$\boldsymbol{b}'_n\in H$, by Pythagorean theorem we have
$$\|\boldsymbol{b}_n-\boldsymbol{b}'_n\|=\sqrt{\|\boldsymbol{b}_n\|^2-\|\boldsymbol{b}'_n\|^2}\leqslant|\boldsymbol{b}_n\|\leqslant\lambda.$$
Hence
$$\|\boldsymbol{v}-\boldsymbol{v}'-a_n(\boldsymbol{b}_n-\boldsymbol{b}'_n)\|=
|a_n-c_n|\cdot \|\boldsymbol{b}_n-\boldsymbol{b}'_n\|\leqslant
\frac{1}{2}\lambda.$$ Since
$\boldsymbol{v}'-a_n\boldsymbol{b}'_n-\boldsymbol{u}'\in H$,
$(\boldsymbol{v}-\boldsymbol{v}'-a_n(\boldsymbol{b}_n-\boldsymbol{b}'_n))\bot
H$, again by Pythagorean theorem, we have
$$\|\boldsymbol{v}-a_n\boldsymbol{b}_n-\boldsymbol{u}'\|=
\sqrt{\|\boldsymbol{v}-\boldsymbol{v}'-a_n(\boldsymbol{b}_n-\boldsymbol{b}'_n)\|^2+
\|\boldsymbol{v}'-a_n\boldsymbol{b}'_n-\boldsymbol{u}'\|^2}\leqslant\frac{\sqrt{n}}{2}\lambda,$$
which completes the proof of the inequality, i.e., $\boldsymbol{u}_0=a_n\boldsymbol{b}_n+\boldsymbol{u}'\in \Lambda$ satisfies $\|\boldsymbol{v}-\boldsymbol{u}_0\|\leqslant\frac{\sqrt{n}}{2}\lambda$. Note that in order to achieve
the equality, we must have $$\|\boldsymbol{v}-\boldsymbol{u}_0\|=\min_{\boldsymbol{u}\in\Lambda}\|\boldsymbol{v}-\boldsymbol{u}\|,$$ and $\boldsymbol{b}_1,\ldots,\boldsymbol{b}_{n-1}$ have to be
mutually orthogonal with length $\lambda$ by the induction hypothesis,
$\|\boldsymbol{b}_n\|=\lambda$, and $\|\boldsymbol{b}'_n\|=0$, i.e.
$\boldsymbol{b}_n$ has to be orthogonal to the hyperplane spanned by
$\boldsymbol{b}_1,\ldots,\boldsymbol{b}_{n-1}$. In addition, $c_1,\ldots,c_{n-1}$ has to be half-integers by
the induction hypothesis, and $c_n$ is a half-integer since
$|a_n-c_n|=\frac{1}{2}$. It is clear that the equality holds under the above conditions, which completes the proof.
\end{proof}

\begin{thm}\label{mt}
For $n\leqslant4$, every lattice in $\R^n$ is standard.
\end{thm}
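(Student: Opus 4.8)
The plan is to argue by induction on $n$, with the base case $n=1$ supplied by Proposition \ref{d1} and Lemma \ref{bd} carrying the inductive step. Assume $2\le n\le4$ and that every lattice in $\R^{n-1}$ is standard. Let $\Lambda\subseteq\R^n$ have successive minima $\lambda_1\le\cdots\le\lambda_n$, and by Theorem 1.6 choose $\R$-linearly independent $\boldsymbol{u}_1,\ldots,\boldsymbol{u}_n\in\Lambda$ with $\|\boldsymbol{u}_i\|=\lambda_i$. Set $V={\rm Span}(\boldsymbol{u}_1,\ldots,\boldsymbol{u}_{n-1})$ and $\Lambda'=\Lambda\cap V$. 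The first (routine) point is that $\Lambda'$ is an $(n-1)$-dimensional lattice whose successive minima are exactly $\lambda_1,\ldots,\lambda_{n-1}$: the vectors $\boldsymbol{u}_1,\ldots,\boldsymbol{u}_{n-1}\in\Lambda'$ give the bound ``$\le$'' and the inclusion $\Lambda'\subseteq\Lambda$ gives ``$\ge$'', both straight from the definition. By the induction hypothesis $\Lambda'$ has a basis $\boldsymbol{b}_1,\ldots,\boldsymbol{b}_{n-1}$ with $\|\boldsymbol{b}_i\|=\lambda_i$; since $\Lambda'=\Lambda\cap V$ is a primitive sublattice of $\Lambda$, this extends to a basis $\boldsymbol{b}_1,\ldots,\boldsymbol{b}_{n-1},\boldsymbol{c}$ of $\Lambda$, and the completions of $\boldsymbol{b}_1,\ldots,\boldsymbol{b}_{n-1}$ to a basis of $\Lambda$ are precisely the vectors $\pm\boldsymbol{c}+\boldsymbol{w}$ with $\boldsymbol{w}\in\Lambda'$. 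Let $\boldsymbol{b}_n$ be one of smallest norm among these, and set $h={\rm dist}(\boldsymbol{c},V)>0$, the common distance from $V$ of all these completions.

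The heart of the argument is to show $\|\boldsymbol{b}_n\|\le\lambda_n$, and I would split this into two cases. If $h\le\tfrac12\lambda_n$: letting $P$ be the orthogonal projection onto $V$, Lemma \ref{bd} applied to $\Lambda'$ (whose basis $\boldsymbol{b}_1,\ldots,\boldsymbol{b}_{n-1}$ has all norms $\le\lambda_{n-1}$) and the point $P(\boldsymbol{c})\in V$ yields $\boldsymbol{u}^{*}\in\Lambda'$ with $\|P(\boldsymbol{c})-\boldsymbol{u}^{*}\|\le\tfrac{\sqrt{n-1}}{2}\lambda_{n-1}$. Since $\boldsymbol{c}-\boldsymbol{u}^{*}$ is a completion and $\boldsymbol{c}-\boldsymbol{u}^{*}=(\boldsymbol{c}-P(\boldsymbol{c}))+(P(\boldsymbol{c})-\boldsymbol{u}^{*})$ is an orthogonal sum, the Pythagorean theorem gives
$$\|\boldsymbol{b}_n\|^2\le\|\boldsymbol{c}-\boldsymbol{u}^{*}\|^2=h^2+\|P(\boldsymbol{c})-\boldsymbol{u}^{*}\|^2\le\tfrac14\lambda_n^2+\tfrac{n-1}{4}\lambda_{n-1}^2\le\tfrac{n}{4}\lambda_n^2\le\lambda_n^2,$$
using $\lambda_{n-1}\le\lambda_n$ and $n\le4$. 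If instead $h>\tfrac12\lambda_n$: the vector $\boldsymbol{u}_n$ lies outside $V$, so in the basis $\boldsymbol{b}_1,\ldots,\boldsymbol{b}_{n-1},\boldsymbol{c}$ its $\boldsymbol{c}$-coefficient $a_n$ is a nonzero integer; from $|a_n|\,h={\rm dist}(\boldsymbol{u}_n,V)\le\|\boldsymbol{u}_n\|=\lambda_n<2h$ we get $|a_n|=1$, so $\boldsymbol{u}_n$ itself completes $\boldsymbol{b}_1,\ldots,\boldsymbol{b}_{n-1}$ to a basis of $\Lambda$, whence $\|\boldsymbol{b}_n\|\le\|\boldsymbol{u}_n\|=\lambda_n$. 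In either case $\|\boldsymbol{b}_n\|\le\lambda_n$.

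It remains to upgrade this to $\|\boldsymbol{b}_n\|=\lambda_n$. If $\|\boldsymbol{b}_n\|<\lambda_n$, let $j$ be the least index with $\lambda_j=\lambda_n$; then $\lambda_i<\lambda_j$ for $i<j$, so $\boldsymbol{b}_1,\ldots,\boldsymbol{b}_{j-1},\boldsymbol{b}_n$ would be $j$ linearly independent vectors of $\Lambda$ all of norm $<\lambda_j$, contradicting the definition of $\lambda_j$. Hence $\|\boldsymbol{b}_n\|=\lambda_n$, the basis $\boldsymbol{b}_1,\ldots,\boldsymbol{b}_n$ achieves the successive minima, and $\Lambda$ is standard, completing the induction. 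The \emph{main obstacle} is the estimate in the case $h\le\tfrac12\lambda_n$: the whole argument turns on $\tfrac14+\tfrac{n-1}{4}\le1$, i.e.\ $n\le4$, which is exactly the threshold past which Lemma \ref{bd}'s covering bound is too weak to be traded against the height $h$ --- and past which, consistently, the counterexamples of Theorem \ref{ce} exist. The secondary thing to pin down carefully is the reduction: that $\Lambda\cap V$ has successive minima $\lambda_1,\ldots,\lambda_{n-1}$, and that a basis of a primitive sublattice extends to a basis of the whole lattice.
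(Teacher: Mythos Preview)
Your argument is correct, and it follows the same inductive skeleton as the paper (reduce to $\Lambda'=\Lambda\cap V$, invoke the induction hypothesis, then push through using Lemma~\ref{bd}), but the endgame is organized differently. The paper takes $\boldsymbol{b}_n=\boldsymbol{c}_n$ with $\|\boldsymbol{c}_n\|=\lambda_n$, sets $K=L(\boldsymbol{b}_1,\ldots,\boldsymbol{b}_n)$, and assumes $\Lambda$ is non-standard; Lemma~\ref{bd} is then applied in dimension $n$ to $K$, yielding for any $\boldsymbol{v}\in\Lambda\setminus K$ a vector $\boldsymbol{v}-\boldsymbol{u}\notin H$ with $\lambda_n\le\|\boldsymbol{v}-\boldsymbol{u}\|\le\frac{\sqrt n}{2}\lambda_n$. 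For $n\le3$ this is already a contradiction, while for $n=4$ the paper invokes the \emph{equality clause} of Lemma~\ref{bd} to pin down $\Lambda$ completely (orthogonal $\boldsymbol{b}_i$ of equal length and half-integer cosets) and then exhibits an explicit basis. By contrast, you work constructively: you extend the basis of the primitive sublattice $\Lambda'$ to a basis of $\Lambda$, choose the last vector $\boldsymbol{b}_n$ to have minimal norm among all completions, apply Lemma~\ref{bd} only in dimension $n-1$ to $\Lambda'$, and separate out the height $h$ with a Pythagorean splitting and the case $h>\tfrac12\lambda_n$. The gain is uniformity: your argument treats $n=2,3,4$ identically and never touches the equality case of Lemma~\ref{bd}. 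The paper's route, on the other hand, actually identifies the borderline $4$-dimensional lattice explicitly, so it carries extra structural information that your proof bypasses. Both hinge on the same numerical threshold $\frac{n}{4}\le1$.
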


\begin{proof}
Use induction on $n$. We have proved the case of $n=1$ in Theorem
\ref{d1}. Assume that the theorem holds for dimension $n-1$ ($2\leqslant
n\leqslant4$). Denote the lattice by $\Lambda$ and its successive
minima by $\lambda_1,\ldots,\lambda_n$. By Theorem 1.6, there exists
an $\R$-basis
$\{\boldsymbol{c}_1,\ldots,\boldsymbol{c}_n\}\subseteq\Lambda$ of
$\R^n$ with $\|\boldsymbol{c}_i\|=\lambda_i$ for $1\leqslant i\leqslant n$. Let $H$ be
the hyperplane spanned by
$\boldsymbol{c}_1,\ldots,\boldsymbol{c}_{n-1}$, we can see that
$L=H\cap\Lambda$ is a discrete additive subgroup of $H$ and not
contained in any $(n-2)$-dimensional subspace of $H$, hence by
Theorem 1.3 $L$ is a lattice in the $(n-1)$-dimensional Euclidean
space $H$. By definition the successive minima of the lattice
$L\subseteq\Lambda$ is $\lambda_1,\ldots,\lambda_{n-1}$. By
the induction hypothesis, $L$ is standard, i.e. there exists a basis
$\boldsymbol{b}_1,\ldots,\boldsymbol{b}_{n-1}$ of $L$ with
$\|\boldsymbol{b}_i\|=\lambda_i,1\leqslant i\leqslant n-1$. Let
$\boldsymbol{b}_n=\boldsymbol{c}_n$, then $\boldsymbol{b}_n\notin
H$, which implies that
$\{\boldsymbol{b}_1,\ldots,\boldsymbol{b}_n\}\subseteq\Lambda$ is an
$\R$-basis of $\R^n$.

Let $K=L(\boldsymbol{b}_1,\ldots,\boldsymbol{b}_n)$. If $\Lambda$ is non-standard, for any
$\boldsymbol{v}\in\Lambda\setminus K$ (such $\boldsymbol{v}$ exists
since the non-standard $\Lambda$ is not generated by
$\boldsymbol{b}_1,\ldots,\boldsymbol{b}_n$), by Lemma 3.2, there
exists some $\boldsymbol{u}\in K$ such that
$$\|\boldsymbol{v}-\boldsymbol{u}\|=\min_{\boldsymbol{w}\in K}\|\boldsymbol{v}-\boldsymbol{w}\|\leqslant\frac{\sqrt{n}}{2}\lambda_n.$$
It is clear that (by the additive group structure of lattice)
$\boldsymbol{v}-\boldsymbol{u}\in\Lambda$,
$\boldsymbol{v}-\boldsymbol{u}\notin K$, and $L=H\cap\Lambda$ is contained in $K$, so
$\boldsymbol{v}-\boldsymbol{u}\notin H$. In other words,
$\boldsymbol{b}_1,\ldots,\boldsymbol{b}_{n-1},\boldsymbol{v}-\boldsymbol{u}$
is also an $\R$-basis of $\R^n$. Hence, by the definition of the
successive minima, we have
$$\lambda_n\leqslant\|\boldsymbol{v}-\boldsymbol{u}\|\leqslant\frac{\sqrt{n}}{2}\lambda_n.$$
(If $\lambda_n>\|\boldsymbol{v}-\boldsymbol{u}\|$, since $0\neq
\boldsymbol{v}-\boldsymbol{u}\in\Lambda$,
$\lambda_1\leqslant\|\boldsymbol{v}-\boldsymbol{u}\|$. Choose
$1\leqslant k<n$ such that
$\lambda_k\leqslant\|\boldsymbol{v}-\boldsymbol{u}\|<\lambda_{k+1}$,
then
$\boldsymbol{b}_1,\ldots,\boldsymbol{b}_k,\boldsymbol{v}-\boldsymbol{u}\in\Lambda$
are $k+1$ linearly independent vectors with length $<\lambda_{k+1}$, which is a contradiction.) Hence $n\geqslant4$.

If $n=2$ or $n=3$, the contradiction above shows that $\Lambda$ is
standard. If $n=4$, all inequalities above must be equalities. Now
by Lemma 3.2,
$$\min_{\boldsymbol{w}\in K}\|\boldsymbol{v}-\boldsymbol{w}\|=\frac{\sqrt{4}}{2}\lambda_4$$
implies that
$\boldsymbol{b}_1,\boldsymbol{b}_2,\boldsymbol{b}_3,\boldsymbol{b}_4$
are mutually orthogonal,
$\|\boldsymbol{b}_1\|=\|\boldsymbol{b}_2\|=\|\boldsymbol{b}_3\|=\|\boldsymbol{b}_4\|$,
i.e. $\lambda_{1}=\lambda_{2}=\lambda_{3}=\lambda_{4}$, and for any $\boldsymbol{v}\in\Lambda\setminus K$, one has
$\boldsymbol{v}=\sum_{i=1}^4(a_i+\frac{1}{2})\boldsymbol{b}_i$ for some $a_i\in\Z$, $i=1,\cdots,4$.
So
$$\Lambda\setminus K\subseteq\left\{\sum_{i=1}^{4}\left(d_{i}+\frac{1}{2}\right)\boldsymbol{b}_{i}\mid d_{i}\in\Z, i=1,\cdots,4\right\}.$$
Moreover, since there exists a vector
$\boldsymbol{v}=\sum_{i=1}^4(a_i+\frac{1}{2})\boldsymbol{b}_i\in\Lambda$, where $a_i\in\Z$, $i=1,\cdots,4$,
for any
$\boldsymbol{w}=\sum_{i=1}^4(d_i+\frac{1}{2})\boldsymbol{b}_i$,
$d_i\in\Z, i=1,\cdots,4$, we have
$$\boldsymbol{w}_0=\boldsymbol{w}-\sum_{i=1}^4(d_i-a_i)\boldsymbol{b}_i\in \Lambda,$$  hence $\boldsymbol{w}=\boldsymbol{w}_0+\boldsymbol{v}\in\Lambda$,
which implies that
$$\Lambda=K\cup\left\{\sum_{i=1}^{4}\left(a_{i}+\frac{1}{2}\right)\boldsymbol{b}_{i}\mid a_{i}\in\mathbb{Z}, i=1,\cdots,4\right\}.$$
It is clear that
$\boldsymbol{b}_1,\boldsymbol{b}_2,\boldsymbol{b}_3,\frac{1}{2}(\boldsymbol{b}_1+\boldsymbol{b}_2+\boldsymbol{b}_3+\boldsymbol{b}_4)$
is a basis of the lattice $\Lambda$, which achieve the successive
minima. Thus $\Lambda$ is standard, which completes the proof.
\end{proof}

\section{Arbitrary Norms}

Note that the definition of lattice depends only on the algebraic
structure of  $\R^n$ as a linear space, but the successive minima
depends on the norm. An interesting discussion on successive minima with respect to arbitrary
norms can be found in \cite{C}. In this section, we require no longer the
Euclidean metric in $\R^n$. We will see that in an arbitrary norm in
$\R^n$, every lattice is standard if and only if $n=1,2$. The case
of $n=1$ is trivial and the proof in Proposition 3.1 still applies.
The proof of next theorem shows the the algebraic structure of the
lattice in $\R^2$ deeply.

\begin{thm}
For an arbitrary norm in $\R^2$, every lattice is standard.
\end{thm}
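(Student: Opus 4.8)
The plan is to manufacture a basis achieving the successive minima $\lambda_1\leqslant\lambda_2$ by starting from a shortest vector and then repeatedly improving a completing vector. First I would take $\boldsymbol{b}_1\in\Lambda$ with $\|\boldsymbol{b}_1\|=\lambda_1$ (such a vector exists, since each ball $N(r)$ contains only finitely many lattice points), and record that $\R\boldsymbol{b}_1\cap\Lambda=\Z\boldsymbol{b}_1$: if $\boldsymbol{b}_1=k\boldsymbol{w}$ with $\boldsymbol{w}\in\Lambda$ and $|k|\geqslant2$, then $\|\boldsymbol{w}\|<\lambda_1$, impossible. Then I would pick $\boldsymbol{b}_2\in\Lambda\setminus\R\boldsymbol{b}_1$ of minimal norm (again this minimum is attained by discreteness) and verify $\|\boldsymbol{b}_2\|=\lambda_2$ straight from Definition 1.4: since $\boldsymbol{b}_1,\boldsymbol{b}_2$ are linearly independent and $\|\boldsymbol{b}_1\|=\lambda_1\leqslant\|\boldsymbol{b}_2\|$, the ball $N(\|\boldsymbol{b}_2\|)$ contains two linearly independent lattice vectors, so $\lambda_2\leqslant\|\boldsymbol{b}_2\|$; conversely, if $\lambda_2<\|\boldsymbol{b}_2\|$ there would be two linearly independent lattice vectors of norm $<\|\boldsymbol{b}_2\|$, at least one of which lies outside $\R\boldsymbol{b}_1$, contradicting the choice of $\boldsymbol{b}_2$. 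In particular, with $\boldsymbol{b}_1$ a shortest vector, \emph{every} vector of $\Lambda$ outside $\R\boldsymbol{b}_1$ has norm $\geqslant\lambda_2$.

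The heart of the argument would be an induction on the index $m=[\Lambda:L(\boldsymbol{b}_1,\boldsymbol{b}_2)]$, the claim being that any configuration with $\boldsymbol{b}_1$ a shortest vector, $\boldsymbol{b}_2\in\Lambda\setminus\R\boldsymbol{b}_1$ with $\|\boldsymbol{b}_2\|=\lambda_2$, and index $m$ yields a basis achieving the successive minima. If $m=1$ then $\{\boldsymbol{b}_1,\boldsymbol{b}_2\}$ is already such a basis. Assume $m\geqslant2$, so there is $\boldsymbol{v}\in\Lambda\setminus L(\boldsymbol{b}_1,\boldsymbol{b}_2)$; after subtracting suitable integer multiples of $\boldsymbol{b}_1$ and $\boldsymbol{b}_2$ I may assume $\boldsymbol{v}=x\boldsymbol{b}_1+y\boldsymbol{b}_2$ with $|x|\leqslant\tfrac12$ and $|y|\leqslant\tfrac12$. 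Since $\boldsymbol{v}\notin L(\boldsymbol{b}_1,\boldsymbol{b}_2)$ we have $(x,y)\neq(0,0)$, and in fact $y\neq0$: otherwise $\boldsymbol{v}=x\boldsymbol{b}_1\in\R\boldsymbol{b}_1\cap\Lambda=\Z\boldsymbol{b}_1$ forces $x\in\Z$, hence $x=0$, a contradiction. Thus $\boldsymbol{v}\notin\R\boldsymbol{b}_1$, so $\|\boldsymbol{v}\|\geqslant\lambda_2$ by the previous paragraph, while the triangle inequality gives $\|\boldsymbol{v}\|\leqslant|x|\lambda_1+|y|\lambda_2\leqslant\tfrac12(\lambda_1+\lambda_2)\leqslant\lambda_2$ using $\lambda_1\leqslant\lambda_2$. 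Hence all these are equalities: $\tfrac12(\lambda_1+\lambda_2)=\lambda_2$ forces $\lambda_1=\lambda_2$, and then $|x|\lambda_1+|y|\lambda_1=\lambda_1$ forces $|x|=|y|=\tfrac12$. Therefore $\boldsymbol{v}=\tfrac12(\varepsilon_1\boldsymbol{b}_1+\varepsilon_2\boldsymbol{b}_2)$ for some $\varepsilon_1,\varepsilon_2\in\{1,-1\}$, and $\|\boldsymbol{v}\|=\lambda_2$.

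To close the induction I would set $\boldsymbol{b}_2':=\boldsymbol{v}$. From $\boldsymbol{b}_1=\boldsymbol{b}_1$ and $\boldsymbol{b}_2=2\varepsilon_2\boldsymbol{b}_2'-\varepsilon_1\varepsilon_2\boldsymbol{b}_1$, the transition matrix from $\{\boldsymbol{b}_1,\boldsymbol{b}_2'\}$ to $\{\boldsymbol{b}_1,\boldsymbol{b}_2\}$ has determinant $\pm2$, so $L(\boldsymbol{b}_1,\boldsymbol{b}_2)\subseteq L(\boldsymbol{b}_1,\boldsymbol{b}_2')\subseteq\Lambda$ with $[L(\boldsymbol{b}_1,\boldsymbol{b}_2'):L(\boldsymbol{b}_1,\boldsymbol{b}_2)]=2$, whence $[\Lambda:L(\boldsymbol{b}_1,\boldsymbol{b}_2')]=m/2<m$. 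Moreover $\boldsymbol{b}_2'\notin\R\boldsymbol{b}_1$ and $\|\boldsymbol{b}_2'\|=\lambda_2$, so $(\boldsymbol{b}_1,\boldsymbol{b}_2')$ satisfies exactly the hypotheses of the claim with a strictly smaller index; by the induction hypothesis $\Lambda$ has a basis achieving its successive minima, so $\Lambda$ is standard. (Note that the reduction also shows that whenever $m\geqslant2$ it must be even, which is why the index can always be halved.)

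The delicate point — and the only place where a general norm behaves differently from the Euclidean one — is the equality discussion for the triangle inequality: with an arbitrary norm $\|x\boldsymbol{b}_1+y\boldsymbol{b}_2\|$ can equal $|x|\,\|\boldsymbol{b}_1\|+|y|\,\|\boldsymbol{b}_2\|$ even when $\boldsymbol{b}_1$ and $\boldsymbol{b}_2$ are far from parallel (as happens for the $L^1$ norm), so one cannot conclude in one stroke that a shortest completing vector already gives a basis. The halving step is what copes with this, and it uses crucially that $\Lambda/\Z\boldsymbol{b}_1$ is cyclic — a feature special to $\R^2$ and ultimately responsible for the failure of the statement in higher dimensions.
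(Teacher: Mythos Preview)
Your argument is correct, but it is organised quite differently from the paper's. The paper fixes a shortest vector $\boldsymbol{b}_1$ and then takes $\boldsymbol{b}_2$ to be a vector of \emph{minimal norm among all $\boldsymbol{u}$ with $L(\boldsymbol{b}_1,\boldsymbol{u})=\Lambda$}; the existence of such a completing vector is established via the structure theorem for finitely generated modules over a PID (showing $\Lambda/\Z\boldsymbol{b}_1$ is free of rank~$1$). One then proves $\|\boldsymbol{b}_2\|=\lambda_2$ by contradiction: a shorter independent vector $\boldsymbol{v}=a_1\boldsymbol{b}_1+a_2\boldsymbol{b}_2$ would have $|a_2|\geqslant2$, and Euclidean division $a_1=qa_2+r$ together with the minimality of $\|\boldsymbol{b}_2\|$ among completing vectors yields $|a_2|\|\boldsymbol{b}_2\|\leqslant\|\boldsymbol{v}-r\boldsymbol{b}_1\|<(1+r)\|\boldsymbol{b}_2\|$, contradicting $r<|a_2|$.

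You reverse the two difficulties: you choose $\boldsymbol{b}_2$ to be shortest merely among vectors of $\Lambda\setminus\R\boldsymbol{b}_1$, so that $\|\boldsymbol{b}_2\|=\lambda_2$ is immediate and no module theory is needed; the work is then pushed into showing that $(\boldsymbol{b}_1,\boldsymbol{b}_2)$ can be upgraded to a basis, which you do by your index-halving induction. The equality analysis forcing $\lambda_1=\lambda_2$ and $|x|=|y|=\tfrac12$ is clean, and the determinant-$\pm2$ computation is right, so each step genuinely divides the index by~$2$ until it reaches~$1$. The paper's route is a one-shot contradiction and yields the basis directly, at the price of invoking the PID structure theorem for existence; your route is more elementary in that respect but iterative. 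Both rely only on the norm axioms, so both work for arbitrary norms on $\R^2$. One small remark: your closing comment that the argument ``uses crucially that $\Lambda/\Z\boldsymbol{b}_1$ is cyclic'' is not actually invoked anywhere in your proof; the halving step only uses that the reduced $\boldsymbol{v}$ must be a half-integer combination, which follows from the triangle-inequality equalities alone.
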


\begin{proof}
Let $\Lambda$ denote the lattice in $\R^2$ and $\lambda_1,\lambda_2$
denote its successive minima. Let $\boldsymbol{b}_1\in\Lambda$ be
such that
$$\|\boldsymbol{b}_1\|=\min\{\|\boldsymbol{u}\|\mid\boldsymbol{u}\in\Lambda\},$$
the minimum value exists because of the discreteness of $\Lambda$.
Thus $\|\boldsymbol{b}_1\|=\lambda_1$. Let
$\boldsymbol{b}_2\in\Lambda$ be such that
$$\|\boldsymbol{b}_2\|=\min\{\|\boldsymbol{u}\|\mid\Lambda=L(\boldsymbol{b}_1,\boldsymbol{u})\},$$
at the end of the proof we will prove that such vector $\boldsymbol{u}$
exists, and the minimum value exists because of the discreteness of
$\Lambda$. Therefore $\boldsymbol{b}_1$ and
$\boldsymbol{b}_2$ generate $\Lambda$, and
$\|\boldsymbol{b}_2\|\geqslant\lambda_1$. Moreover,
$\boldsymbol{b}_1,\boldsymbol{b}_2$ are linearly independent with
norm $\leqslant\|\boldsymbol{b}_2\|$, hence
$\lambda_2\leqslant\|\boldsymbol{b}_2\|$. We need only to prove that
$\|\boldsymbol{b}_2\|=\lambda_2$, which shows $\Lambda$ has a basis
$\boldsymbol{b}_1,\boldsymbol{b}_2$ achieving its successive minima,
and thus $\Lambda$ is standard.

Assume otherwise, $\|\boldsymbol{b}_2\|>\lambda_2$. Then there exist two linearly independent vectors in $\Lambda$ with
norm $\lambda_1$ and $\lambda_2$ respectively. Thus at least one of these two linearly independent vectors is linearly independent with
$\boldsymbol{b}_1$. Denote this vector by $\boldsymbol{v}$ and write
$$\boldsymbol{v}=a_1\boldsymbol{b}_1+a_2\boldsymbol{b}_2\in\Lambda$$ with $a_1,a_2\in\Z$.
So $a_2\ne0$, and
$\|\boldsymbol{v}\|\leqslant\lambda_2<\|\boldsymbol{b}_2\|$, which
implies, by the definition of $\boldsymbol{b}_2$, that
$L(\boldsymbol{b}_1,\boldsymbol{v})\subsetneqq\Lambda$, and thus
$a_2\ne\pm1$ since
$L(\boldsymbol{b}_1,a_1\boldsymbol{b}_1\pm\boldsymbol{b}_2)=L(\boldsymbol{b}_1,\boldsymbol{b}_2)$
by Proposition 1.2. Therefore $|a_2|\geqslant2$.

Write $a_1=qa_2+r$ with $q,r\in\Z$, where $0\leqslant r<|a_2|$. Note
that by Proposition 1.2,
$L(\boldsymbol{b}_1,q\boldsymbol{b}_1+\boldsymbol{b}_2)=L(\boldsymbol{b}_1,\boldsymbol{b}_2)=\Lambda$,
so by the choice of $\boldsymbol{b}_2$,
$\|q\boldsymbol{b}_1+\boldsymbol{b}_2\|\geqslant\|\boldsymbol{b}_2\|$.
Hence
$$|a_2|\|\boldsymbol{b}_2\|\leqslant\|qa_2\boldsymbol{b}_1+a_2\boldsymbol{b}_2\|=
\|\boldsymbol{v}-r\boldsymbol{b}_1\|\leqslant\|\boldsymbol{v}\|+r\|\boldsymbol{b}_1\|<
(1+r)\|\boldsymbol{b}_2\|$$ since
$\|\boldsymbol{v}\|<\|\boldsymbol{b}_2\|$ and
$\|\boldsymbol{b}_1\|\leqslant\|\boldsymbol{b}_2\|$. Thus, $|a_2|<1+r$,
contradicting the fact that $0\leqslant r<|a_2|$. This completes the
proof.

Now we will prove that there exists $\boldsymbol{u}$ such
that $\Lambda=L(\boldsymbol{b}_1,\boldsymbol{u})$, i.e.
$\boldsymbol{b}_1,\boldsymbol{u}$ is a basis of the lattice
$\Lambda$. By Theorem 1.3, $\Lambda$ is an additive subgroup, which
can be regarded as a finitely generated free $\Z$-module of rank
$2$, and $L(\boldsymbol{b}_1)=\Z\boldsymbol{b}_1$ be its free
$\Z$-submodule of rank $1$. Then the quotient module
$\Lambda/L(\boldsymbol{b}_1)$ is finitely generated. By the
structure theorem of finitely generated module over principle ideal
domains (\S1.5, \cite{XZ}), $\Lambda/L(\boldsymbol{b}_1)$ is a direct sum
of a free $\Z$-module $F$ and a torsion $\Z$-module
$$T=\{\overline{\boldsymbol{w}}=\boldsymbol{w}+L(\boldsymbol{b}_1)\in\Lambda/L(\boldsymbol{b}_1)\mid\exists
m\in\Z\setminus\{0\},m\overline{\boldsymbol{w}}=\bar{\boldsymbol{0}}\},$$
i.e., $\Lambda/L(\boldsymbol{b}_1)=F\oplus T$. It is easy to see that
$T={\bar{\boldsymbol{0}}}$ is trivial, hence
$\Lambda/L(\boldsymbol{b}_1)=F$ is a free $\Z$-module. In fact, if
there exists $\overline{\boldsymbol{w}}\in
T\setminus\{\bar{\boldsymbol{0}}\}$, i.e.
$\boldsymbol{w}\in\Lambda\setminus L(\boldsymbol{b}_1)$, then there
exists an integer $m\neq0$ such that
$\overline{m\boldsymbol{w}}=m\overline{\boldsymbol{w}}=\bar{0}$, i.e.
$m\boldsymbol{w}\in L(\boldsymbol{b}_1)$. Suppose that
$m\boldsymbol{w}=l\boldsymbol{b}_1$, $l\in\Z$, then
$\boldsymbol{w}=\frac{l}{m}\boldsymbol{b}_1$. Since
$\boldsymbol{w}\notin L(\boldsymbol{b}_1)$, $\frac{l}{m}\notin\Z$.
Hence
$\boldsymbol{0}\neq\boldsymbol{w}-\lfloor\frac{l}{m}\rfloor\boldsymbol{b}_1\in\Lambda$,
and
$$0\neq\left\|\boldsymbol{w}-\left\lfloor\frac{l}{m}\right\rfloor\boldsymbol{b}_1\right\|
=\left\|\left(\frac{l}{m}-\left\lfloor\frac{l}{m}\right\rfloor\right)\boldsymbol{b}_1\right\|<\|\boldsymbol{b}_1\|,$$
where $\lfloor\frac{l}{m}\rfloor$ is the greatest integer not greater then
$\frac{l}{m}$. Contradicting the definition of $\boldsymbol{b}_1$.
So $T$ is trivial.

It is clear that $\Lambda/L(\boldsymbol{b}_1)$ is not trivial, and
choose a basis $\overline{\boldsymbol{x}_i},i=1,\cdots,r$ of the
free module $\Lambda/L(\boldsymbol{b}_1)$ of rank $r$, then
$\boldsymbol{x}_1,\cdots,\boldsymbol{x}_r,\boldsymbol{b}_1$ is a
basis of the free module $\Lambda$ of rank $2$. Hence $r+1=2$, and then
$r=1$. Thus $\boldsymbol{u}=\boldsymbol{x}_1$
satisfies $\Lambda=L(\boldsymbol{b}_1,\boldsymbol{u})$, as required.
\end{proof}

\begin{pr}
For $n\geqslant3$, there exists non-standard lattices in $\R^n$ with
$L^1$-norm.

\begin{proof}
We can show that the lattice $\Lambda_n$ constructed in the proof of Theorem
\ref{ce} are non-standard with $L^1$-norm for $n\geqslant3$.
Similar as the proof in Theorem 2.1, the successive minima of
$\Lambda_n$ is $\lambda_1=\cdots=\lambda_n=2$, but any vector in
$\Lambda_n$ with odd coordinates has norm $\geqslant n>2$. This
implies that $\Lambda_n$ is non-standard.
\end{proof}

\end{pr}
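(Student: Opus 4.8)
The plan is to reuse, without change, the family of lattices $\Lambda_n$ from the proof of Theorem~\ref{ce}, and to observe that in the $L^1$-norm the same parity obstruction already applies in dimension $3$: the $L^1$-norm of a vector with $n$ nonzero integer coordinates is at least $n$, whereas its $L^2$-norm is only at least $\sqrt{n}$, so the threshold $n\geqslant 5$ of Theorem~\ref{ce} is improved to $n\geqslant 3$.

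First I would note that $\Lambda_n=\{(a_1,\ldots,a_n)\in\Z^n\mid a_1\equiv\cdots\equiv a_n\pmod 2\}$ is a full-rank lattice --- this is established in Theorem~\ref{ce} and is independent of the choice of norm --- and that it contains the $n$ linearly independent vectors $2\boldsymbol{e}_1,\ldots,2\boldsymbol{e}_n$, each of $L^1$-norm $2$. Next I would determine the successive minima for $\|\boldsymbol{a}\|_1=\sum_{i=1}^n|a_i|$: if a nonzero $\boldsymbol{a}\in\Lambda_n$ has all coordinates even, then some coordinate $a_j$ is a nonzero even integer and $\|\boldsymbol{a}\|_1\geqslant|a_j|\geqslant 2$; if all coordinates are odd, then every coordinate is nonzero and $\|\boldsymbol{a}\|_1\geqslant n\geqslant 3>2$. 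Hence $\lambda_1\geqslant 2$, and together with $2\boldsymbol{e}_1,\ldots,2\boldsymbol{e}_n$ this forces $\lambda_1=\cdots=\lambda_n=2$.

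Finally I would carry over the basis argument from Theorem~\ref{ce} verbatim: any basis $\boldsymbol{b}_1,\ldots,\boldsymbol{b}_n$ of $\Lambda_n$ must contain some $\boldsymbol{b}_k$ with all coordinates odd, since otherwise every $\boldsymbol{b}_i$ would have all coordinates even and $(1,\ldots,1)\in\Lambda_n$ could not be written as a $\Z$-linear combination of them. By the estimate above, $\|\boldsymbol{b}_k\|_1\geqslant n>2=\lambda_k$, so this basis does not achieve the successive minima; as the choice of basis was arbitrary, $\Lambda_n$ is non-standard.

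I do not anticipate a real obstacle: the argument is a routine transcription of the proof of Theorem~\ref{ce} with $\sqrt{n}$ replaced by $n$. The only point that deserves a line of care is confirming that all $n$ successive minima --- not merely $\lambda_1$ --- equal $2$, which is handled by exhibiting the $n$ linearly independent norm-$2$ vectors $2\boldsymbol{e}_i$.
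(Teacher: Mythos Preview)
Your proposal is correct and follows essentially the same approach as the paper: you reuse the lattice $\Lambda_n$ from Theorem~\ref{ce}, compute its $L^1$ successive minima as $\lambda_1=\cdots=\lambda_n=2$ via the same even/odd case split (with $n$ replacing $\sqrt{n}$), and invoke the identical parity obstruction on bases. The paper's proof is terser, simply pointing back to Theorem~\ref{ce}, but the content is the same.
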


\section*{Acknowledgements}
The authors would like to thank Prof. Rainer Schulze-Pillot from Universitaet des Saarlandes for pointing out some valuable information on the topics in this paper, and also to Chunhui Liu from Universit\'e
Paris Diderot - Paris 7 for his helpful advices.

\vspace*{.5cm}

\textbf{ References}

%% The Appendices part is started with the command \appendix;
%% appendix sections are then done as normal sections
%% \appendix

%% \section{}
%% \label{}

%% References
%%
%% Following citation commands can be used in the body text:
%% Usage of \cite is as follows:
%%   \cite{key}          ==>>  [#]
%%   \cite[chap. 2]{key} ==>>  [#, chap. 2]
%%   \citet{key}         ==>>  Author [#]

%% References with bibTeX database:

\bibliographystyle{model1a-num-names}
\bibliography{<your-bib-database>}

%% Authors are advised to submit their bibtex database files. They are
%% requested to list a bibtex style file in the manuscript if they do
%% not want to use model1a-num-names.bst.

%% References without bibTeX database:

% \begin{thebibliography}{00}

%% \bibitem must have the following form:
%%   \bibitem{key}...
%%

% \bibitem{}

% \end{thebibliography}

\end{document}